\documentclass[11pt]{amsart}
\usepackage{times,cite}
\allowdisplaybreaks[4]
\pdfminorversion 4
\usepackage{amssymb}
\textwidth 150mm
\textheight 220mm
\topmargin -5pt
\oddsidemargin 10pt
\evensidemargin 10pt
\usepackage{mathrsfs}
\numberwithin{equation}{section}
\allowdisplaybreaks[4]
\newtheorem{Theorem}{Theorem}[section]

\newtheorem{Corollary}[Theorem]{Corollary}
\newtheorem{Lemma}[Theorem]{Lemma}
\newtheorem{Example}[Theorem]{Example}

\newtheorem{Definition}[Theorem]{Definition}

\unless\ifcsname proof\endcsname

\fi
\def\argmin{\mathop{\mathrm{arg\ min}}}
\def\supp{{\rm supp}\ }
\def\bbZ{\mathbb{Z}}

\def\bbR{\mathbb{R}}

\def\LTR{L^2(\bbR^n)}
\begin{document}

\title[Calder\'{o}n-Zygmund Operators]{Calder\'{o}n-Zygmund Operators with Non-diagonal Singularity}

\author{Kangwei Li}
\address{School of Mathematical Sciences and LPMC, Nankai University, Tianjin 300071, China}
\email{likangwei9@mail.nankai.edu.cn}

\author{Wenchang Sun}
\address{School of Mathematical Sciences and LPMC, Nankai University, Tianjin 300071, China}
\email{sunwch@nankai.edu.cn}

\thanks{This work was supported partially by the
National Natural Science Foundation of China(10971105 and 10990012).}

\begin{abstract}
In this paper, we introduce a  class of singular integral operators
which generalize Calder\'on-Zygmund operators to the more general
case, where the set of singular points of the kernel need not to be the diagonal,
but instead, it can be a general hyper curve.
We show that such operators have similar properties as ordinary
Calder\'on-Zygmund operators. In particular, we prove that they are of weak-type $(1, 1)$
and strong type $(p,p)$ for $1<p<\infty$.
\end{abstract}

\keywords{Calder\'{o}n-Zygmund operators;  CZO}

\subjclass[2000]{42B20}%
\maketitle

\section{Introduction}
We say that $T$ is a  Calder\'{o}n-Zygmund operator
 if $T$ is a continuous linear operator maps $C_c^{\infty}(\bbR^n)$ into $\mathcal{D}'(\bbR^n)$
that extends to a bounded operator on $L^2(\bbR^n)$, and whose distribution kernel $K$ coincides with a function $K(x,y)$  defined away from the diagonal $x=y$ in
  $\bbR^n\times\bbR^n$ such that
\begin{eqnarray}
&& |K(x,y)|\le \frac{A}{|x-y|^n},\\
&& |K(x,y)-K(z,y)|+|K(y,x)-K(y,z)|\le \frac{A|x-z|^{\varepsilon}}{|x-y|^{n+\varepsilon}},
\end{eqnarray}
hold for some $A>0, \varepsilon>0$ whenever $|x-y|>2|x-z|$ and   for $f\in C_c^{\infty}(\bbR^n)$,
\[
  Tf(x)=\int_{\bbR^n}K(x,y)f(y)dy, \quad\mbox{$x\notin \supp(f)$}.
\]

Using the Calder\'on-Zygmund decomposition, one can prove that such an operator
is of weak type $(1, 1)$ and consequently strong type $(p,p)$.
Therefore, the classical Calder\'on-Zygmund theory is a powerful tool in many aspects of harmonic analysis and partial differential equations
\cite{DH,D,G1,G2,GT,LDY,LT,M,Stein,ST,T,YZ}.
And it has been widely studied in various
directions, e.g., see \cite{BD,CLX,CZ,CDL,DL,DGGLY,DGY,GLY} and references therein.
We refer to \cite{HL,HLY,KR,LSU,L1,L2,LOPTT,LYY,LDY} for some recent advances
of the Calder\'on-Zygmund theory.

Note that the singularity of the kernel $K$ lies in the diagonal $x=y$.
In this paper, we generalize the Calder\'on-Zygmund operator to the more general
case, where the set of singular points of the kernel $K$ can be general hyper curves.

We call $\Gamma$ a \emph{standard hyper curve} in $\bbR^n\times\bbR^n$
and denote it by $\Gamma\in SHC$ if
$\Gamma$ is the union of $r$ hyper curves $\Gamma_i$ in $\bbR^n\times \bbR^n$,
$1\le i\le r$, and satisfies the followings,
\begin{enumerate}
\item
  $\Gamma_i=\{(x,\gamma_i(x)):\,x\in\mathcal D_i\}$, where $\gamma_i$ is a mapping from   a closed domain $\mathcal D_i\subset \bbR^n$  to $\bbR^n$.

\item Each $\gamma_i$ is differentiable and the Jacobian $J_{\gamma_i}(x)$ of $\gamma_i$ has no zero  in the interior of $\mathcal D_i$.

\item  $\gamma_i(x)=\gamma_{i'}(x)$ has at most finitely many solutions
for $1\le i<i'\le r$.

\item Both $\gamma_i$ and $\gamma_i^{-1}$ satisfy the Lipschitz condition, i.e.,
there exists some positive constant  $c_{\gamma}>1$ such that
for any $x,x'\in \mathcal D_i$ and $y, y'\in\gamma_i(\mathcal D_i)$, $1\le i\le r$,
\begin{equation}\label{eq:Gamma}
|\gamma_i(x) - \gamma_i(x')| \le c_{\gamma}|x-x'|,\quad
|\gamma_i^{-1}(y) - \gamma_i^{-1}(y')| \le c_{\gamma}|y-y'|.
\end{equation}
\end{enumerate}

Note that $\mathcal D_i$ might be the same for different $i$. The set $SHC$ consists of
many types of hyper curves, e.g., see Examples~\ref{Ex:x1} and \ref{Ex:x2}.
For simplicity, we also use $\gamma$ to denote
the multi-valued function $\gamma(x):=\{\gamma_i(x):\,  \mathcal D_i\ni x, 1\le i\le r\}$.

For any set $E\subset \bbR^n$, let
$\gamma^{-1}(E) = \bigcup_{i=1}^r \gamma_i^{-1}(E)
  = \bigcup_{i=1}^r \{x:\, \gamma_i(x)\in E\}$.

Let $\rho(x,y)=\min\rho_i(x,y)$, where $\rho_i(x,y)$ is the distance from $(x,y)$ to $\Gamma_i$,
that is,
\[
  \rho_i(x,y)=\inf_{(x',y')\in\Gamma_i}|(x,y)-(x',y')|.
\]

Now we introduce a class of generalized Calder\'on-Zygmund operators $CZO_{\gamma}$,
for which
 the singularity of the kernel $K$ lies in a standard hyper curve $\Gamma$.
\begin{Definition}
Let $\Gamma\in SHC$ be a standard hyper curve in $\bbR^n\times\bbR^n$.
We call $T\in CZO_{\gamma}$ if $T$ is a continuous linear operator maps $C_c^{\infty}(\bbR^n)$ into $\mathcal{D}'(\bbR^n)$
that extends to a bounded operator on $L^2(\bbR^n)$, and whose distribution kernel $K$ coincides with a function $K(x,y)$  defined on $\Gamma^c$ such that
\begin{eqnarray}
&& |K(x,y)|\le \frac{A}{\rho(x,y)^n},\label{eq:k1}\\
&& |K(x,y) - K(x,y') | \le  \frac{A|y-y'|^{\delta}}{\rho(x,y)^{n+\delta}},
   \qquad |y-y'| \le \frac{1}{2}\rho(x,y), \label{eq:k2}\\
&& |K(x,y) - K(x',y) | \le  \frac{A|x-x'|^{\delta}}{\rho(x,y)^{n+\delta}},
   \qquad |x-x'| \le \frac{1}{2}\rho(x,y), \label{eq:k3}
\end{eqnarray}
hold for some $A, \delta>0$ and   for $f\in C_c^{\infty}(\bbR^n)$,
\[
  Tf(x)=\int_{\bbR^n}K(x,y)f(y)dy, \quad x\notin \gamma^{-1}(\supp f).
\]
\end{Definition}

It is easy to see that if $\Gamma$ is defined by $y=\gamma(x)$, where $\gamma$ is an
invertible transform from $\bbR^n$ to $\bbR^n$, then we can make $T$ an ordinary
Calder\'on-Zygmund operator with a simple change of
variables. However, if $\gamma$ is not invertible, e.g., $\Gamma$ is a closed hyper curve,
then the change of variables does not work (see Examples~\ref{Ex:x1} and \ref{Ex:x2}).
In other words, $CZO_{\gamma}$ is a proper generalization of ordinary
Calder\'on-Zygmund Operators.

We show that operators in $CZO_{\gamma}$ have similar properties as ordinary
Calder\'on-Zygmund operators. In Section 2, we consider the truncated kernels and operators
and give a formula for the difference betwwen two operators which share the same kernel.
In Section 3, we show that every operator in $CZO_{\gamma}$ is of weak-type $(1, 1)$
and strong type $(p,p)$ for $1<p<\infty$.

\subsection*{Notations and Definitions}

For a set $E\subset \bbR^n$,
$E^c = \bbR^n\setminus E$, $\overline{E}$ stands for the closure of $E$,
and $|E|$ is the Lebesgue measure of $E$.

\section{Truncated Kernels and Operators}

In this section, we study the truncated kernels and operators
for $T\in CZO_{\gamma}$.

The following result can be proved with the same arguments as that in the proof of
\cite[Proposition 8.1.1]{G2}.
\begin{Theorem}\label{thm:main1}
Let $T\in CZO_{\gamma}$ and $\varepsilon>0$. Set
\begin{equation}\label{eq:Tepsilon}
  T_\varepsilon f=\int_{\rho(x,y)\ge \varepsilon}K(x,y)f(y)dy=\int_{\bbR^n}K_{\varepsilon}(x,y)f(y)dy,
\end{equation}
where $K_{\varepsilon}(x,y)=K(x,y)\chi^{}_{\rho(x,y)\ge \varepsilon}$.
Assume that there exists a constant $B<\infty$ such that
\[
  \sup_{\varepsilon>0}\|T_\varepsilon\|_{L^2\rightarrow L^2}\le B.
\]
Then there exists a linear operator $T_0$ defined on $L^2(\bbR^n)$ such that
\begin{enumerate}
\item The Schwartz kernel of $T_0$ coincides with $K$ on $\Gamma^c$.
\item There exists some sequence $\varepsilon_j\downarrow 0$ such that
        \[
          \int_{\bbR^n}(T_{\varepsilon_j}f)(x)g(x)dx\rightarrow  \int_{\bbR^n}(T_0f)(x)g(x)dx
        \]
        as $j\rightarrow\infty$ for all $f,g\in L^2(\bbR^n)$.
\item $T_0$ is bounded on $L^2(\bbR^n)$ with norm $\|T_0\|_{L^2\rightarrow L^2}\le B$.
\end{enumerate}
\end{Theorem}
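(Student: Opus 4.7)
The approach is a standard weak-operator-compactness argument, essentially a direct adaptation of the proof of Grafakos's Proposition~8.1.1 in which the Euclidean truncation $|x-y|\ge\varepsilon$ is replaced throughout by the hyper-curve truncation $\rho(x,y)\ge\varepsilon$. The three kernel estimates \eqref{eq:k1}--\eqref{eq:k3} play no role at this stage; only the uniform $L^2$ bound $B$ is needed.

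First, I would fix a countable dense subset $\{h_k\}_{k\ge 1}$ of $L^2(\bbR^n)$. For each pair $(k,\ell)$ and every $\varepsilon>0$ the hypothesis gives $|\langle T_\varepsilon h_k,h_\ell\rangle|\le B\|h_k\|_2\|h_\ell\|_2$, so a diagonal extraction in $(k,\ell)$ produces a single sequence $\varepsilon_j\downarrow 0$ along which $\langle T_{\varepsilon_j}h_k,h_\ell\rangle$ converges for every $(k,\ell)$. Using the uniform operator bound $B$, a routine $3\varepsilon$ approximation upgrades this to convergence of $\int (T_{\varepsilon_j}f)(x)g(x)\,dx$ for all $f,g\in L^2(\bbR^n)$. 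The limit is a bounded bilinear form of norm at most $B$ and therefore represents a bounded linear operator $T_0$ on $L^2(\bbR^n)$ with $\|T_0\|_{L^2\to L^2}\le B$. This yields assertions (ii) and (iii) simultaneously.

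To prove (i), take $f,g\in C_c^\infty(\bbR^n)$ whose supports satisfy $\supp g\times\supp f\subset\Gamma^c$. Conditions (i) and (iv) in the definition of $SHC$ (the Lipschitz continuity of $\gamma_i$ on the closed domain $\mathcal D_i$) imply that each graph $\Gamma_i$, and therefore $\Gamma=\bigcup_i\Gamma_i$, is closed in $\bbR^n\times\bbR^n$. Since $\supp g\times\supp f$ is compact, one obtains a positive distance
\[
  d:=\mathrm{dist}(\supp g\times\supp f,\,\Gamma)>0.
\]
For $\varepsilon_j<d$ the cutoff $\chi_{\rho(x,y)\ge\varepsilon_j}$ is identically $1$ on this product, so
\[
  \int (T_{\varepsilon_j}f)(x)g(x)\,dx
  =\iint K(x,y)f(y)g(x)\,dy\,dx.
\]
Passing to the limit in $j$ along our subsequence gives the same identity with $T_0$ in place of $T_{\varepsilon_j}$, which is precisely the assertion that the Schwartz kernel of $T_0$ coincides with $K(x,y)$ on $\Gamma^c$.

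The only place where the curved geometry enters is the verification that $\Gamma$ is closed and that supports in $\Gamma^c$ stay at positive distance from $\Gamma$; both are immediate from conditions (i)--(iv) and constitute the sole difference from the classical argument. Consequently I do not anticipate any substantive obstacle.
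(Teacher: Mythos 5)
Your proof is correct and follows precisely the route the paper indicates: the paper gives no detailed argument, stating only that the result ``can be proved with the same arguments as that in the proof of \cite[Proposition 8.1.1]{G2},'' and your weak-operator-compactness extraction along a diagonal subsequence, followed by the $3\varepsilon$ upgrade and the positive-distance observation for item (i), is exactly that adaptation. The only ingredient specific to the curved setting---that $\Gamma$ is closed because each $\gamma_i$ is continuous on a closed domain $\mathcal D_i$, so compact sets in $\Gamma^c$ stay at positive distance from $\Gamma$---is correctly identified and verified.
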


To consider the difference between $T$ and $T_0$, we need more information on
the set $\Gamma$.

Let $\mathcal D\subset \bbR^n$ and  $\{I_j:\,j\in J\}$  be a sequence of subsets of $\mathcal D$.
We say that  $\{I_j:\,j\in J\}$   forms a partition of $\mathcal D$
if $|I_j \cap I_{j'}|=0$ for $j\ne j'$   and $|\mathcal D\setminus \bigcup_{j\in J} I_j|=0$.

\begin{Lemma}\label{Lm:L1}
There is a sequence of closed cubes $\{I_j:\,j\in J\}$ which forms a partition
of $\bigcup_{i=1}^r \gamma_i(\mathcal D_i)$ such that
$\gamma_i^{-1}(I_j) \cap \gamma_{i'}^{-1}(I_j) =\emptyset$ for any
$j\in J$ and $1\le i<  i' \le r$.
\end{Lemma}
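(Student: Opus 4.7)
The plan is to reduce the construction to a local problem near the finite set of points where distinct branches $\gamma_i$ and $\gamma_{i'}$ coincide, and then to build the partition by a recursive dyadic subdivision that becomes arbitrarily fine as we approach these crossings. First I would set
\[
F = \bigcup_{1 \le i < i' \le r} \bigl\{ \gamma_i(x) : x \in \mathcal{D}_i \cap \mathcal{D}_{i'},\ \gamma_i(x) = \gamma_{i'}(x) \bigr\},
\]
which is finite by property~(3) of $SHC$. The guiding observation is that if a closed cube $Q$ satisfies $\gamma_i^{-1}(Q) \cap \gamma_{i'}^{-1}(Q) \ne \emptyset$, then some $x$ produces two images $\gamma_i(x), \gamma_{i'}(x) \in Q$ with $|\gamma_i(x) - \gamma_{i'}(x)| \le \mathrm{diam}(Q)$, so a small bad cube must sit near $F$.

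The key step is a compactness lemma: for every compact $K \subset \bbR^n \setminus F$ there exists $\delta_K > 0$ such that every closed cube $Q \subset K$ of diameter less than $\delta_K$ satisfies $\gamma_i^{-1}(Q) \cap \gamma_{i'}^{-1}(Q) = \emptyset$ for all $1 \le i < i' \le r$. The bi-Lipschitz bound~(\ref{eq:Gamma}) makes $\gamma_i^{-1}(K) \cap \gamma_{i'}^{-1}(K)$ compact, and the continuous function $x \mapsto |\gamma_i(x) - \gamma_{i'}(x)|$ is strictly positive there because any zero would place $\gamma_i(x)$ into $K \cap F = \emptyset$. Taking the positive minimum over the finitely many pairs $(i,i')$ produces $\delta_K$.

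With these preparations, I would build the partition by recursive dyadic subdivision. Shift the standard dyadic grid by a generic vector so that no coordinate of any $y_l \in F$ becomes a dyadic rational; then each $y_l$ lies in the interior of a unique closed dyadic cube at every scale, so repeated subdivision places $y_l$ strictly inside a single descendant at each level while its $2^n - 1$ siblings do not touch $y_l$ at all. Starting from the shifted unit cubes, I inspect each cube $Q$: retain it if $\gamma_i^{-1}(Q) \cap \gamma_{i'}^{-1}(Q) = \emptyset$ for every pair $(i,i')$, and otherwise replace it by its $2^n$ dyadic children and recurse. Pairwise measure-zero overlap of the retained cubes is inherited from the dyadic structure.

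The main obstacle is to check that the retained family $\{I_j\}$ covers $\bigcup_{i=1}^r \gamma_i(\mathcal{D}_i)$ up to measure zero. For any $y \notin F$, pick a compact neighbourhood $K$ of $y$ disjoint from $F$; the compactness lemma guarantees that the unique shifted dyadic cube containing $y$ is good, and therefore retained, once its diameter falls below $\delta_K$, so $y$ is covered. The only uncovered points are the elements of $F$: for any closed cube $Q$ containing some $y_l$, choosing the pair $(i,i')$ that witnesses $y_l \in F$ and the corresponding $x_l$ with $\gamma_i(x_l) = \gamma_{i'}(x_l) = y_l$ forces $x_l \in \gamma_i^{-1}(Q) \cap \gamma_{i'}^{-1}(Q)$, so such a cube is perpetually bad; but $F$ is finite and hence Lebesgue null, which is why the conclusion of the lemma still holds and $\{I_j\}$ is the desired partition.
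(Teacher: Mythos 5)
Your construction follows the same blueprint as the paper's: both rely on the finiteness of the crossing set $F$ (denoted $Y$ in the paper), on continuity/compactness to produce dyadic cubes small enough that the preimages $\gamma_i^{-1}(Q)$ and $\gamma_{i'}^{-1}(Q)$ separate, and on the nesting of dyadic cubes to extract an almost-disjoint family. The paper fixes a suitable dyadic cube $I_y$ for each $y$ in the interior of $\mathcal{Y}\setminus Y$ and then takes the maximal ones, whereas you run the same construction top-down as a Whitney-type stopping-time subdivision; these are two descriptions of the same family of maximal good cubes. (The grid shift you introduce is not actually needed: even if some $y_l\in F$ lies on a dyadic face, the bad cubes abutting $y_l$ at level $m$ shrink to $y_l$ as $m\to\infty$, so the uncovered set is still only $F$; and the lemma only asks for closed cubes, not cubes from the unshifted grid.)

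There is, however, one genuine omission. By the definition of ``partition'' given immediately before the lemma, each $I_j$ must be a \emph{subset} of $\mathcal{Y}=\bigcup_{i=1}^r\gamma_i(\mathcal{D}_i)$, and the paper's proof enforces $y\in I_y\subset\mathcal{Y}$ explicitly. Your stopping rule retains any $Q$ with $\gamma_i^{-1}(Q)\cap\gamma_{i'}^{-1}(Q)=\emptyset$ for all $i<i'$, with no requirement that $Q\subset\mathcal{Y}$; if $\mathcal{Y}$ is bounded, an early-generation cube will be retained even though most of it lies outside $\mathcal{Y}$, so the resulting family is not a partition of $\mathcal{Y}$ in the paper's sense. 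The fix is to add $Q\subset\mathcal{Y}$ to the retention criterion and otherwise subdivide. Your coverage argument then still applies to interior points of $\mathcal{Y}\setminus F$ (small enough cubes about such a point both lie in $\mathcal{Y}$ and are good, by your compactness lemma), and the exceptional set becomes $F\cup\partial\mathcal{Y}$; that $\partial\mathcal{Y}$ is Lebesgue-null is tacitly assumed by the paper as well, so once you add the containment condition your proof matches theirs.
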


\begin{proof}
Let $\mathcal Y=\bigcup_{i=1}^r \gamma_i(\mathcal D_i)$ and
$Y = \{y\in\bbR^n: $  there exist $1\le i<i'\le r$  and $x\in\bbR^n
\mbox{ such that } y=\gamma_i(x)=\gamma_{i'}(x)\}$. Then $Y$ has only finitely many elements.

Since every $\gamma_i$ is continuous,
for each $y$ in the interior of $\mathcal Y\setminus Y$, there is some cube of the form
$I_y:=\prod_{k=1}^n [\frac{l_k}{2^m},\frac{l_k+1}{2^m}]$, where $m, l_k\in\bbZ$ and $m>0$,
such that $y\in I_y\subset \mathcal Y$ and
$\gamma_i^{-1}(I_y) \cap \gamma_{i'}^{-1}(I_y) =\emptyset$ for any $1\le i<  i' \le r$.

It follows that for different $I_y$'s, either they are
mutually disjointed
or one is contained in the other.
Since $\mathcal Y\setminus Y = \bigcup_{y\in \mathcal Y\setminus Y} I_y$
and  $\{I_y:\,y\in Y\}$ is at most countable,
we get the conclusion.
\end{proof}

\begin{Theorem}\label{thm:T0}
Let the hypotheses be as in Theorem~\ref{thm:main1}.
Then there exist measurable functions $b_i$ on $\mathcal D_i$  such that
\begin{equation}\label{eq:thm1:e0}
        (T-T_0)f(x)= \sum_{i=1}^r b_i(x) f(\gamma_i(x)) \chi^{}_{\mathcal D_i}(x),\qquad a.e.
\end{equation}
for all $f\in L^2(\bbR^n)$. Moreover,
$|b_i(x)|^2 \cdot |J_{\gamma_i}^{-1}(x)|  \chi^{}_{\mathcal D_i}(x)\in L^{\infty}$,
$1\le i\le r$.
\end{Theorem}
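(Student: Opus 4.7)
The plan is to study $S:=T-T_0$ as an $L^2$-bounded operator whose Schwartz kernel is supported on $\Gamma$, localize via Lemma~\ref{Lm:L1}, and reduce to the classical fact that an $L^2$-bounded operator whose distributional kernel is supported on the diagonal is multiplication by an $L^\infty$ function. Both $T$ and $T_0$ are bounded on $L^2$, so $S$ is bounded on $L^2(\bbR^n)$ with some norm $M$. Because both operators coincide with integration against $K(x,y)$ on $\Gamma^c$, the Schwartz kernel of $S$ is supported in $\Gamma=\bigcup_{i=1}^r\Gamma_i$.

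Fix a cube $I_j$ from Lemma~\ref{Lm:L1} and put $E_{i,j}:=\gamma_i^{-1}(I_j)\subset\mathcal{D}_i$. By Lemma~\ref{Lm:L1} the sets $\{E_{i,j}\}_i$ (for fixed $j$) are mutually disjoint, so for $x$ in the interior of $E_{i_0,j}$ one has $\gamma_i(x)\notin I_j$ whenever $i\ne i_0$. Consequently the restriction of the kernel of $S$ to $E_{i_0,j}\times I_j$ is supported on the single graph piece $\{(x,\gamma_{i_0}(x)):x\in E_{i_0,j}\}$, since the other $\Gamma_i$ do not enter $E_{i_0,j}\times I_j$. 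Using the bi-Lipschitz homeomorphism $\gamma_{i_0}:E_{i_0,j}\to I_j$, I would define an operator $\tilde S_{i_0,j}$ on $L^2(I_j)$ weakly by
\[
\langle \tilde S_{i_0,j} g,\, h\rangle := \langle S(g\,\chi^{}_{I_j}),\, (h\circ\gamma_{i_0})\,\chi^{}_{E_{i_0,j}}\rangle,\qquad g,h\in L^2(I_j).
\]
The Lipschitz bounds together with $J_{\gamma_{i_0}}\ne 0$ make $\tilde S_{i_0,j}$ bounded on $L^2(I_j)$, and the pushforward of the kernel of $S|_{E_{i_0,j}\times I_j}$ under $(x,y)\mapsto(\gamma_{i_0}(x),y)$ is supported on the diagonal of $I_j\times I_j$.

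The classical diagonal-support result then yields $\tilde S_{i_0,j} g(u)=\beta_{i_0,j}(u)g(u)$ with $\beta_{i_0,j}\in L^\infty(I_j)$. Transferring back through $u=\gamma_{i_0}(x)$ produces $Sf(x)=\beta_{i_0,j}(\gamma_{i_0}(x))\,f(\gamma_{i_0}(x))$ for $x\in E_{i_0,j}$. Setting $b_i(x):=\beta_{i,j}(\gamma_i(x))$ on $E_{i,j}$ and letting $j$ vary gives a well-defined measurable function on $\mathcal{D}_i$, since $\{I_j\}$ partitions $\mathcal{Y}=\bigcup_i\gamma_i(\mathcal{D}_i)$ up to a null set. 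Summing over $i$ recovers \eqref{eq:thm1:e0}.

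For the $L^\infty$ bound, the change-of-variables identity
\[
\int_{E_{i,j}}|b_i(x)\,f(\gamma_i(x))|^2\,dx = \int_{I_j}|\beta_{i,j}(u)|^2|f(u)|^2\,|J_{\gamma_i}^{-1}(\gamma_i^{-1}(u))|\,du,
\]
combined with $\|S\|_{L^2\to L^2}\le M$, forces $|\beta_{i,j}(u)|^2\,|J_{\gamma_i}^{-1}(\gamma_i^{-1}(u))|\le M^2$ for a.e.\ $u\in I_j$, which is precisely $|b_i(x)|^2\,|J_{\gamma_i}^{-1}(x)|\le M^2$ for a.e.\ $x\in E_{i,j}$. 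The main obstacle sits inside the classical diagonal-support fact (the rigidity step converting a bounded operator with kernel on a graph into pure multiplication); the work specific to this setting is verifying that Lemma~\ref{Lm:L1} genuinely decouples the multi-sheeted singular set $\Gamma$ into single-sheet pieces on which the change of variables $y=\gamma_i(x)$ linearizes the problem to the diagonal case.
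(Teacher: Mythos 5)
Your proposal is correct in substance but takes a genuinely different route from the paper. You reduce to the single--sheet case by straightening the graph piece $\{(x,\gamma_{i_0}(x)):x\in E_{i_0,j}\}$ to the diagonal of $I_j\times I_j$ via the bi-Lipschitz change of variables $u=\gamma_{i_0}(x)$, and then invoke the classical fact that an $L^2$-bounded operator with Schwartz kernel supported on the diagonal is multiplication by an $L^\infty$ function. The paper instead works directly in the twisted coordinates: it first proves the multiplicativity identity $(T_0-T)(gf)=(T_0-T)(g)\cdot(f\circ\tilde\gamma_j)\chi_{\tilde\gamma_j^{-1}(I_j)}$ by an $\varepsilon$-truncation/locality argument (for $x$ outside $\overline{\tilde\gamma_j^{-1}(Q)}$, the truncated operators annihilate $g\chi_Q$ for small $\varepsilon$), and then constructs the multiplier $h_j$ by exhausting $I_j$ with nested subcubes $I_{j,t}$ and showing consistency of $(T_0-T)(\chi_{I_{j,t}})$. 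What each route buys: yours is conceptually cleaner because it reuses known machinery and isolates all of the $\gamma$-dependent work in the change of variables; the paper's is self-contained and avoids having to justify the diagonal-support lemma. But note that the classical lemma you cite is proved by exactly the locality-plus-nested-cubes argument that the paper carries out (just with $\gamma=\mathrm{id}$), so the apparent shortening is largely a repackaging: the real content is the same, moved into the black box. Both approaches lean equally on Lemma~\ref{Lm:L1} to decouple the sheets.

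One bookkeeping slip worth fixing: with your weak definition $\langle\tilde S_{i_0,j}g,h\rangle=\langle S(g\chi_{I_j}),(h\circ\gamma_{i_0})\chi_{E_{i_0,j}}\rangle$, a change of variables shows $\tilde S_{i_0,j}g(u)=S(g\chi_{I_j})(\gamma_{i_0}^{-1}(u))\,|J_{\gamma_{i_0}^{-1}}(u)|$, so the transfer back gives $Sf(x)=\beta_{i_0,j}(\gamma_{i_0}(x))\,|J_{\gamma_{i_0}}(x)|\,f(\gamma_{i_0}(x))$ on $E_{i_0,j}$, i.e.\ $b_{i_0}=(\beta_{i_0,j}\circ\gamma_{i_0})\cdot|J_{\gamma_{i_0}}|$ rather than just $\beta_{i_0,j}\circ\gamma_{i_0}$. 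Since $|J_{\gamma_{i_0}}|$ is bounded above and below (by the two-sided Lipschitz condition), this does not affect the conclusion, and your $L^\infty$ estimate is in fact derived directly from $\|S\|_{L^2\to L^2}\le M$ via change of variables, which is independent of the precise relationship between $b_i$ and $\beta_{i,j}$; but the intermediate formula should either include the Jacobian or you should build the Jacobian weight into the definition of $\tilde S_{i_0,j}$.
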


\begin{proof}
We use notations in Theorem~\ref{thm:main1}.
By Lemma~\ref{Lm:L1}, there is a sequence of closed cubes $\{I_j:\,j\in J\}$ which forms a partition
of $\bigcup_{i=1}^r \gamma_i(\mathcal D_i)$ and satisfies
$\gamma_i^{-1}(I_j) \cap \gamma_{i'}^{-1}(I_j) =\emptyset$ for any
$j\in J$ and $1\le i<  i' \le r$.

First, we prove that for $j\in J$,
\begin{equation}\label{eq:thm1:e1}
(T_0-T)(gf)(x) = (T_0-T)(g)(x) \cdot
     f(\tilde\gamma_j(x))\chi^{}_{\tilde\gamma_j^{-1}(I_j)}(x),
\end{equation}
where $f\in \LTR$, $g$ is bounded,  $\supp f, \supp g\subset I_j$,
and $\tilde\gamma_j$ is a mapping from $\gamma^{-1}(I_j)$ to $I_j$ which is induced by $\gamma$,
i.e., for $x\in \gamma_i^{-1}(I_j)$, $\tilde\gamma_j(x) = \gamma_i(x)$.

Fix some open cube $Q\subset I_j$.
Observe that $\tilde\gamma_j^{-1}(Q) = \gamma^{-1}(Q)$
and $\overline{\gamma^{-1}(Q)} = \gamma^{-1}(\overline{Q})$.
If $x\not\in \overline{\tilde\gamma_j^{-1}(Q)}$, then
$\varepsilon^{}_Q := \min_{y\in\overline{Q}} \rho(x,y)>0$.
Consequently, for $0<\varepsilon< \varepsilon^{}_Q$,
we have
\[
  (T_{\varepsilon}-T)(g\chi^{}_Q)(x) = 0 =(T_{\varepsilon}-T)(g)(x) \cdot \chi^{}_{\tilde\gamma_j^{-1}(Q)}(x).
\]
On the other hand, if $x\in \tilde\gamma_j^{-1}(Q)$, then
$x\not\in \tilde\gamma_j^{-1}(I_j\setminus Q)
= \gamma^{-1}(I_j\setminus Q)$. Therefore,
$\min_{y\in I_j\setminus Q} \rho(x,y)>0$.
It follows that for $\varepsilon$ sufficiently small, we have
\[
  (T_{\varepsilon}-T)(g\chi^{}_{I_j\setminus Q})(x) = 0 =(T_{\varepsilon}-T)(g)(x) \cdot \chi^{}_{\tilde\gamma_j^{-1}(I_j\setminus Q)}(x).
\]
Consequently, for $x\in \tilde\gamma_j^{-1}(Q)$, we have
\begin{eqnarray*}
(T_{\varepsilon}-T)(g\chi^{}_Q)(x)
&=& (T_{\varepsilon}-T)(g)(x) - (T_{\varepsilon}-T)(g\chi^{}_{I_j\setminus Q})(x)  \\
&=& (T_{\varepsilon}-T)(g)(x) \cdot \chi^{}_{\bbR^n\setminus\tilde\gamma_j^{-1}(I_j\setminus Q)}(x)   \\
&=& (T_{\varepsilon}-T)(g)(x) \cdot \chi^{}_{ \tilde\gamma_j^{-1}(  Q)}(x).
\end{eqnarray*}
Summing up the above arguments, we get that for almost every $x$, whenever $\varepsilon$ is
sufficiently small,
\begin{eqnarray}
\!\!\!\!(T_{\varepsilon}-T)(g\chi^{}_Q)(x)
 &=& (T_{\varepsilon}-T)(g)(x) \cdot \chi^{}_{ \tilde\gamma_j^{-1}(  Q)}(x)\nonumber  \\
 &=& (T_{\varepsilon}-T)(g)(x) \cdot \chi^{}_{Q}( \tilde\gamma_j(x))
 \chi^{}_{\tilde\gamma_j^{-1}(I_j)}(x).
 \nonumber
\end{eqnarray}
Taking weak limits in the above equations, we get
\begin{equation}\label{eq:thm1:e2}
(T_0-T)(g\chi^{}_Q)(x)
 = (T_0-T)(g)(x) \cdot \chi^{}_{Q}( \tilde\gamma_j(x))\chi^{}_{\tilde\gamma_j^{-1}(I_j)}(x),
\qquad a.e.
\end{equation}
By linearity, we extend (\ref{eq:thm1:e2}) to simple functions, and then to arbitrary $f\in\LTR$
which is supported in $I_j$,
i.e.,
\begin{equation}\label{eq:thm1:e3}
(T_0-T)(g f )(x)
 = (T_0-T)(g)(x) \cdot f( \tilde\gamma_j(x))\chi^{}_{\tilde\gamma_j^{-1}(I_j)}(x),
\qquad a.e.
\end{equation}

Assume that $I_j = \prod_{k=1}^n [a_{j,k},b_{j,k}]$.
For $t_k, t'_k\in [a_{j,k}, b_{j,k}]$ with $t_k<t'_k$, define
$I_{j,t} = \prod_{k=1}^n [a_{j,k},t_k]$ and
$I_{j,t'} = \prod_{k=1}^n [a_{j,k},t'_k]$.
We have
\begin{eqnarray}
(T_0-T)(\chi^{}_{I_{j,t}})(x)
&=&
   (T_0-T)(\chi^{}_{I_{j,t}}\cdot \chi^{}_{I_{j,t'}})(x) \nonumber \\
&=&
    (T_0-T)(\chi^{}_{I_{j,t'}})(x)
    \chi^{}_{I_{j,t}}(\tilde\gamma_j(x))\chi^{}_{\tilde\gamma_j^{-1}(I_j)}(x). \nonumber
\end{eqnarray}
It follows that for $x\in  \tilde\gamma_j^{-1}(I_{j,t})$, we have
\[
  (T_0-T)(\chi^{}_{I_{j,t}})(x) = (T_0-T)(\chi^{}_{I_{j,t'}})(x).
\]
Consequently, there is a function $h_j$ defined on $\tilde\gamma_j^{-1}(I_{j})$ such that
\[
  h_j(x) =  (T_0-T)(\chi^{}_{I_{j,t}})(x),\qquad x\in  \tilde\gamma_j^{-1}(I_{j,t}).
\]

For any $f\in\LTR$, let $f_j = f\cdot \chi^{}_{I_j}$. Then we have
\begin{eqnarray*}
(T_0 - T)(f_j\cdot \chi^{}_{I_{j,t}})(x)
&=&  f_j( \tilde\gamma_j(x)) (T_0 - T)( \chi^{}_{I_{j,t}})(x)
     \chi^{}_{\tilde\gamma_j^{-1}(I_j)}(x)\\
&=&       f_j( \tilde\gamma_j(x))
     h_j(x),\qquad x\in  \tilde\gamma_j^{-1}(I_{j,t}).
\end{eqnarray*}
By letting $(t_1,\cdots,t_n) \rightarrow (b_{j,1},\cdots,b_{j,n})$, we get
\[
(T_0 - T)(f_j)(x)
=       f_j( \tilde\gamma_j(x))
     h_j(x),\qquad x\in  \tilde\gamma_j^{-1}(I_{j}).
\]
Observe that $(T_0 - T)(f_j)(x)=0$ for $x\not\in \tilde\gamma_j^{-1}(I_{j})$.
We have
\begin{equation}\label{eq:T0T}
(T_0 - T)(f_j)(x)
=       f_j( \tilde\gamma_j(x))
     h_j(x)\chi^{}_{\tilde\gamma_j^{-1}(I_{j})}(x),\qquad a.e.
\end{equation}
Since $(T_0 - T)(f)(x) = 0$ whenever $\supp f\subset (\bigcup_{i=1}^r \gamma_i(\mathcal D_i))^c$,
we have
\begin{eqnarray*}
(T_0 -  T)(f)(x) =   \sum_{j\in J}  f_j( \tilde\gamma_j(x))
     h_j(x)\chi^{}_{\tilde\gamma_j^{-1}(I_{j})}(x),\qquad a.e.
\end{eqnarray*}
Note that for
every $j\in J$, we have  $\tilde\gamma_j^{-1}(I_{j}) = \bigcup_{i=1}^r \gamma_i^{-1}(I_j)$.
Hence
\begin{eqnarray*}
(T_0 -  T)(f)(x)
&=&   \sum_{j\in J}  f_j( \tilde\gamma_j(x))
     h_j(x)  \sum_{i=1}^r \chi^{}_{\gamma_i^{-1}(I_j)}(x) \\
&=&  \sum_{i=1}^r
    \sum_{j\in J}  f_j( \tilde\gamma_j(x))  h_j(x)  \chi^{}_{\gamma_i^{-1}(I_j)}(x)\\
&=&  \sum_{i=1}^r    f( \gamma_i(x))
    \sum_{j\in J}  h_j(x)  \chi^{}_{\gamma_i^{-1}(I_j)}(x).
\end{eqnarray*}
By setting $b_i(x) = \sum_{j\in J}  h_j(x)  \chi^{}_{\gamma_i^{-1}(I_j)}(x)$, we get
(\ref{eq:thm1:e0}).

Take  some $f\in\LTR$ and $j\in J$. By (\ref{eq:T0T}), we have
\begin{eqnarray*}
\|(T_0-T)(f\cdot\chi^{}_{I_j})\|_2^2
&=& \left\|
    \sum_{i=1}^r f(\gamma_i(x)) b_i(x) \chi^{}_{\gamma_i^{-1}(I_j)}(x)
   \right\|_2^2 \\
&=&
    \sum_{i=1}^r \int_{\gamma_i^{-1}(I_j)} \left|f(\gamma_i(x)) b_i(x) \right|^2 dx \\
&=&
    \sum_{i=1}^r \int_{I_j} \left|f(y) b_i(\gamma_i^{-1}(y)) \right|^2 |J_{\gamma_i^{-1}}(y)|
     dx.
\end{eqnarray*}
Since $\|(T_0-T)(f\cdot\chi^{}_{I_j})\|_2^2 \le (\|T\|_{L^2\rightarrow L^2} + B)^2
\|f\cdot\chi^{}_{I_j}\|_2^2$ for any $f\in\LTR$, we have
\[
  \sum_{i=1}^r \left|  b_i(\gamma_i^{-1}(y)) \right|^2 |J_{\gamma_i^{-1}}(y)|
   \le (\|T\|_{L^2\rightarrow L^2} + B)^2, \qquad a.e. \mbox{ on } I_j.
\]
Hence
\[
  \sum_{i=1}^r \left|  b_i(x) \right|^2 |J_{\gamma_i}^{-1}(x)|
   \le (\|T\|_{L^2\rightarrow L^2} + B)^2, \qquad a.e. \mbox{ on } \gamma_i^{-1}(I_j).
\]
Therefore, $\left|  b_i(x) \right|^2 |J_{\gamma_i}^{-1}(x)|
  \cdot \chi^{}_{\mathcal D_i}(x)\in L^{\infty}$, $1\le i\le r$.
This completes the proof.
\end{proof}

The following is an immediate consequence, which gives the difference between two
operators which share the same kernel.

\begin{Corollary}\label{Co:c1}
Let $S$ and $T$ be two operators in $CZO_{\gamma}$ which share the same kernel $K$.
Then there exist measurable functions $b_i$ on $\mathcal D_i$  such that
\begin{equation}\label{eq:c1:e0}
        (S-T)f(x)= \sum_{i=1}^r b_i(x) f(\gamma_i(x)) \chi^{}_{\mathcal D_i}(x),\qquad a.e.
\end{equation}
for all $f\in L^2(\bbR^n)$. Moreover,
$|b_i(x)|^2 \cdot |J_{\gamma_i}^{-1}(x)|  \chi^{}_{\mathcal D_i}(x)\in L^{\infty}$,
$1\le i\le r$.
\end{Corollary}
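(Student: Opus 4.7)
The plan is to mimic the proof of Theorem~\ref{thm:T0}, playing the role of $T_0$ with the operator $S$. A careful reading of the proof of Theorem~\ref{thm:T0} shows that the only properties of $T_0$ that were used are: (a) $S-T$ is bounded on $L^2(\bbR^n)$, and (b) for every $x$ lying sufficiently far from $\supp f$ (more precisely, for $x\notin \gamma^{-1}(\supp f)$), $(T_0-T)f(x)=0$, which in the original proof came from the fact that both $T_0$ and $T$ admit the integral representation against the common kernel $K$ in that region, for $T_0$ this being justified by the weak-limit construction in Theorem~\ref{thm:main1}. Both (a) and (b) are now satisfied by $S-T$ directly from the definition of $CZO_{\gamma}$: $\|S-T\|_{L^2\to L^2}\le \|S\|_{L^2\to L^2}+\|T\|_{L^2\to L^2}<\infty$, and for $x\notin \gamma^{-1}(\supp f)$ both $Sf(x)$ and $Tf(x)$ are given by $\int_{\bbR^n}K(x,y)f(y)dy$, hence $(S-T)f(x)=0$.

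With these two inputs, I would run the proof of Theorem~\ref{thm:T0} line by line with $T_0$ replaced by $S$. Concretely, invoke Lemma~\ref{Lm:L1} to fix the partition $\{I_j:\,j\in J\}$ and the associated induced maps $\tilde\gamma_j$. For an open cube $Q\subset I_j$ and bounded $g$ supported in $I_j$, the previous observation gives $(S-T)(g\chi^{}_Q)(x)=0$ whenever $x\notin \overline{\tilde\gamma_j^{-1}(Q)}$ and also $(S-T)(g\chi^{}_{I_j\setminus Q})(x)=0$ whenever $x\in \tilde\gamma_j^{-1}(Q)$, since by Lemma~\ref{Lm:L1} the sets $\gamma_i^{-1}(Q)$ and $\gamma_{i'}^{-1}(I_j\setminus Q)$ are separated by a positive distance. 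Splitting $(S-T)g = (S-T)(g\chi^{}_Q)+(S-T)(g\chi^{}_{I_j\setminus Q})$ yields, exactly as in (\ref{eq:thm1:e2}),
\[
  (S-T)(g\chi^{}_Q)(x)=(S-T)g(x)\cdot \chi^{}_Q(\tilde\gamma_j(x))\chi^{}_{\tilde\gamma_j^{-1}(I_j)}(x),\qquad a.e.
\]
Linearity then extends this identity to simple functions and to $f\in L^2(\bbR^n)$ supported in $I_j$.

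Next, I would take $f=\chi^{}_{I_{j,t}}$ for the family of sub-cubes $I_{j,t}=\prod_k[a_{j,k},t_k]$ exactly as in the proof of Theorem~\ref{thm:T0}, obtaining a consistent function $h_j$ on $\tilde\gamma_j^{-1}(I_j)$ satisfying $h_j(x)=(S-T)(\chi^{}_{I_{j,t}})(x)$ for $x\in\tilde\gamma_j^{-1}(I_{j,t})$. Letting $t\to(b_{j,1},\ldots,b_{j,n})$ gives the analogue of (\ref{eq:T0T}), namely $(S-T)(f\chi^{}_{I_j})(x)=f(\tilde\gamma_j(x))h_j(x)\chi^{}_{\tilde\gamma_j^{-1}(I_j)}(x)$ a.e. Summing over $j\in J$ and regrouping by the branches $\gamma_i$ produces (\ref{eq:c1:e0}) with
\[
  b_i(x)=\sum_{j\in J}h_j(x)\chi^{}_{\gamma_i^{-1}(I_j)}(x).
\]

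For the $L^\infty$ bound, I would repeat the final computation of Theorem~\ref{thm:T0}: using the change of variables $y=\gamma_i(x)$ on each $\gamma_i^{-1}(I_j)$ (disjoint across $i$ by Lemma~\ref{Lm:L1}) gives
\[
  \|(S-T)(f\chi^{}_{I_j})\|_2^2=\sum_{i=1}^r\int_{I_j}|f(y)|^2|b_i(\gamma_i^{-1}(y))|^2|J_{\gamma_i^{-1}}(y)|\,dy,
\]
and bounding the left-hand side by $(\|S\|_{L^2\to L^2}+\|T\|_{L^2\to L^2})^2\|f\chi^{}_{I_j}\|_2^2$ and letting $f$ range over $L^2$ yields $|b_i(x)|^2|J_{\gamma_i}^{-1}(x)|\chi^{}_{\mathcal D_i}(x)\in L^\infty$. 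The only genuinely new point to verify, and the mildest obstacle, is property (b) above for $S-T$ on functions supported in a cube $Q$, and this is transparent from the defining integral representation of elements of $CZO_{\gamma}$; the rest is essentially a bookkeeping rerun of Theorem~\ref{thm:T0}'s proof.
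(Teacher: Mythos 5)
Your proof is correct, and the key observations you isolate (that $S-T$ is $L^2$-bounded and that $(S-T)f(x)=0$ for $x\notin\gamma^{-1}(\supp f)$ because $S$ and $T$ share the kernel) are exactly what makes the argument go through. However, you rerun the entire proof of Theorem~\ref{thm:T0} with $S$ in place of $T_0$, while the paper achieves the result with a much shorter reduction: since $S$ and $T$ share the kernel $K$, the operator $S-T$ is itself in $CZO_\gamma$ with kernel identically zero, so all truncations $(S-T)_\varepsilon$ vanish and hence $(S-T)_0=0$; applying Theorem~\ref{thm:T0} to the operator $S-T$ then immediately yields that $(S-T)-(S-T)_0=S-T$ has the stated form. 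Your line-by-line replay is logically sound and captures the same underlying mechanism, but it misses the point that Theorem~\ref{thm:T0} can be invoked as a black box on $S-T$ rather than reproved; the gain of the paper's approach is a three-line proof, whereas yours reproduces a page of argument that is already established.
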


\begin{proof}
Let $(S-T)_{\varepsilon}$ be defined similarly as $T_{\varepsilon}$ in (\ref{eq:Tepsilon}).
Since $S-T$ has the kernel zero,
we have $(S-T)_0:= \lim_{\varepsilon\rightarrow 0} (S - T)_{\varepsilon}=0$.
Consequently,
\[
  S-T = S-T - (S-T)_0.
\]
Now the conclusion follows by Theorem~\ref{thm:T0}.
\end{proof}

Now we give two examples.
In the first example, $\Gamma$ consists of two hyper curves which have one common point.
\begin{Example}\label{Ex:x1}
let $\gamma(x)= \pm x$. It is easy to check that
$\gamma$ determines a standard hyper curve in $\bbR^n\times \bbR^n$.
By Theorem~\ref{thm:T0},
for any $T\in CZO_{\gamma}$, we can find $b_1, b_2\in L^{\infty}$ such that
\[
  (T-T_0)(f)(x) = b_1 (x)f(x) + b_2(x) f(-x), \qquad x\in\bbR^n,
\]
\end{Example}

And in the next example, $\Gamma$ is a diamond.
\begin{Example}\label{Ex:x2}
Suppose that $n=1$.
Let $\gamma(x) = \pm(1-|x|)$ for $|x|\le1$, and $0$ for $|x|\ge 1$.
Then $\gamma$ determines a standard hyper curve in $\bbR\times \bbR$.
By Theorem~\ref{thm:T0},
for any $T\in CZO_{\gamma}$,  we can find $b_1, b_2\in L^{\infty}$ such that
\[
  (T-T_0)(f)(x) =  \begin{cases}
  b_1 (x)f(1-|x|) + b_2(x) f(|x|-1) , & |x|\le 1,
          \\
          0, &|x|>1.
  \end{cases}
\]
\end{Example}

\section{Weak Type (1,1) and the $L^p$ boundedness}

It is well known that a Calder\'on-Zygmund operator is of weak type $(1,1)$ and strong type $(p,p)$
for $1 <p<\infty$.
In this section, we show that operators in $ CZO_{\gamma}$
have the same property.

First, we give some properties of $\rho(x,y)$.

For any $x,y\in\bbR^n$, let $\xi_{i,x}$ be the point in $\mathcal D_i$ which is most
close to $x$, i.e.,
\[
  \xi_{i,x} = \argmin_{x'\in\mathcal D_i} |x-x'|.
\]
Similarly,
\[
  \eta_{i,y} = \argmin_{y'\in \gamma_i(\mathcal D_i)} |y-y'|.
\]
For a cube $Q$, we define $\eta_{i,Q} = \{\eta_{i,y}:\,y\in Q\}$.

For any $x,y\in\bbR^n$, let
\begin{eqnarray}
\tilde\rho_i(x,y) &=& |x-\xi_{i,x}| + |y - \gamma_i(\xi_{i,x})|, \label{eq:tilde:rho}
          \\
\tilde\rho^*_i(x,y) &=& |y-\eta_{i,y}| + |x - \gamma_i^{-1}(\eta_{i,y})|,
\label{eq:tilde:rho:*}
\end{eqnarray}
$\tilde\rho(x,y) = \min_{1\le i\le r} \tilde\rho_i(x,y)$
and  $\tilde\rho^*(x,y) = \min_{1\le i\le r} \tilde\rho^*_i(x,y)$.

Next we show that
both
$\tilde\rho(x,y)$ and $\tilde\rho^*(x,y)$ are equivalent to $\rho(x,y)$.
\begin{Lemma}\label{Lm:rho}
For any $x,y\in\bbR^n$,  we have
\begin{eqnarray}
\rho_i(x,y) &\le& \tilde\rho_i(x,y) \le 2( c_{\gamma}+1) \rho_i(x,y),   \label{eq:rho:i1}\\
\rho_i(x,y) &\le& \tilde\rho^*_i(x,y) \le 2( c_{\gamma}+1) \rho_i(x,y). \label{eq:rho:i2}
\end{eqnarray}
\end{Lemma}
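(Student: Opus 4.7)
The plan is to prove both two-sided inequalities by essentially the same calculation, once for $\tilde\rho_i$ using the Lipschitz constant of $\gamma_i$ and once for $\tilde\rho_i^*$ using the Lipschitz constant of $\gamma_i^{-1}$. The existence of the points $\xi_{i,x}$ and $\eta_{i,y}$ attaining the respective $\arg\min$ should be noted first: since $\mathcal{D}_i$ is closed, the distance from $x$ to $\mathcal{D}_i$ is attained; and since $\gamma_i^{-1}$ is Lipschitz, $\gamma_i$ is a proper map, so $\gamma_i(\mathcal{D}_i)$ is closed and the distance from $y$ to $\gamma_i(\mathcal{D}_i)$ is also attained.

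For the left inequalities, I would simply note that $(\xi_{i,x},\gamma_i(\xi_{i,x}))\in\Gamma_i$ and $(\gamma_i^{-1}(\eta_{i,y}),\eta_{i,y})\in\Gamma_i$. Hence
\[
\rho_i(x,y)\le \bigl|(x,y)-(\xi_{i,x},\gamma_i(\xi_{i,x}))\bigr|\le |x-\xi_{i,x}|+|y-\gamma_i(\xi_{i,x})|=\tilde\rho_i(x,y),
\]
using $\sqrt{a^{2}+b^{2}}\le a+b$ for $a,b\ge 0$, and symmetrically for $\tilde\rho_i^{*}(x,y)$.

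For the right inequalities, fix an arbitrary $(x',\gamma_i(x'))\in\Gamma_i$ and set $d:=|(x,y)-(x',\gamma_i(x'))|$, so $|x-x'|\le d$ and $|y-\gamma_i(x')|\le d$. The minimality of $\xi_{i,x}$ gives $|x-\xi_{i,x}|\le|x-x'|\le d$, and then by the triangle inequality together with the Lipschitz bound \eqref{eq:Gamma},
\[
|y-\gamma_i(\xi_{i,x})|\le |y-\gamma_i(x')|+c_\gamma|x'-\xi_{i,x}|\le d+c_\gamma(|x-x'|+|x-\xi_{i,x}|)\le (2c_\gamma+1)d.
\]
Adding these yields $\tilde\rho_i(x,y)\le 2(c_\gamma+1)d$, and taking the infimum over $(x',\gamma_i(x'))\in\Gamma_i$ proves \eqref{eq:rho:i1}. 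The argument for \eqref{eq:rho:i2} is identical after swapping the roles of $x$ and $y$: one uses $|y-\eta_{i,y}|\le|y-\gamma_i(x')|\le d$ and bounds $|x-\gamma_i^{-1}(\eta_{i,y})|$ by writing $x'=\gamma_i^{-1}(\gamma_i(x'))$ and applying the Lipschitz bound for $\gamma_i^{-1}$.

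There is no real obstacle here beyond bookkeeping; the only point that deserves a sentence is the existence of the minimizers $\xi_{i,x}$ and $\eta_{i,y}$, which follows from the closedness of $\mathcal{D}_i$ and of $\gamma_i(\mathcal{D}_i)$ (the latter via properness of $\gamma_i$, itself a consequence of the Lipschitz condition on $\gamma_i^{-1}$).
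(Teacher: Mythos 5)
Your proof is correct and follows essentially the same route as the paper: both use the triangle inequality together with the Lipschitz bound $|\gamma_i(x_0)-\gamma_i(\xi_{i,x})|\le c_\gamma|x_0-\xi_{i,x}|$ to pass from the Euclidean distance to $\Gamma_i$ to the sum $\tilde\rho_i$. The only cosmetic difference is that the paper fixes a minimizing $x_0\in\mathcal D_i$ and treats the cases $x\in\mathcal D_i$ and $x\notin\mathcal D_i$ separately, whereas you take an arbitrary $(x',\gamma_i(x'))\in\Gamma_i$ and pass to the infimum at the end, which handles both cases uniformly; your preliminary remark on the existence of $\xi_{i,x}$ and $\eta_{i,y}$ (closedness of $\mathcal D_i$, closedness of $\gamma_i(\mathcal D_i)$ via the Lipschitz bound on $\gamma_i^{-1}$) is a point the paper leaves implicit.
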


\begin{proof}
Fix some $(x,y)$. There exists some $x_0\in\mathcal D_i$ such that
$\rho_i(x,y) = |(x,y)-(x_0,\gamma_i(x_0))|$.

If $x\in \mathcal D_i$, then $\xi_{i,x} = x$.
It follows that
\begin{eqnarray*}
  \rho_i(x,y) &\le& |y-\gamma_i(x)| \\
  &\le& |y-\gamma_i(x_0)| + |\gamma_i(x_0) - \gamma_i(x)| \\
  &\le& |y-\gamma_i(x_0)| + c_{\gamma} |x_0 - x| \\
  &\le& 2^{1/2}c_{\gamma}  \rho_i(x,y).
\end{eqnarray*}
If $x\not\in \mathcal D_i$, then we have
\begin{eqnarray*}
  \rho_i(x,y)
   &\le& |x-\xi_{i,x}| + |y - \gamma_i(\xi_{i,x})| \\
   &\le& |x-x_0| + |y - \gamma_i(x_0)| + |\gamma_i(x_0) - \gamma_i(\xi_{i,x})| \\
   &\le& 2^{1/2}\rho_i(x,y) + c_{\gamma}|x_0 - \xi_{i,x}| \\
   &\le& 2^{1/2}\rho_i(x,y)+ c_{\gamma}(|x_0 - x| + |x-\xi_{i,x}|) \\
   &\le& 2^{1/2}\rho_i(x,y)+ 2 c_{\gamma}|x_0 - x|   \\
   &\le& (2c_{\gamma}+2)\rho_i(x,y).
\end{eqnarray*}
This proves (\ref{eq:rho:i1}). And (\ref{eq:rho:i2}) can be proved similarly.
\end{proof}

We see from Lemma~\ref{Lm:rho} that
\begin{eqnarray}
\rho(x,y) &\le& \tilde\rho(x,y) \le 2( c_{\gamma}+1) \rho(x,y),   \label{eq:rho:1}\\
\rho(x,y) &\le& \tilde\rho^*(x,y) \le 2( c_{\gamma}+1) \rho(x,y). \label{eq:rho:2}
\end{eqnarray}
In other words, all of
$\rho(x,y)$, $\tilde\rho(x,y)$ and $\tilde\rho^*(x,y)$ are equivalent.

With the result above, we can prove that (\ref{eq:k2}) and (\ref{eq:k3})
implies the H\"ormander condition.
\begin{Lemma}[The H\"ormander condition]\label{Lm:Hoermander}
Suppose that the kernel $K(x,y)$ satisfies (\ref{eq:k2}) and (\ref{eq:k3}).
Then there is some constant $C$ such that
\begin{eqnarray}
&& \int_{\rho(x,y)\ge 2|y-z|}|K(x,y)-K(x,z)|dx\le C,\label{eq:H1}\\
&& \int_{\rho(y,x)\ge 2|y-z|}|K(y,x)-K(z,x)|dx\le C.\label{eq:H2}
\end{eqnarray}
\end{Lemma}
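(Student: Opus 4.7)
The plan is to reduce both Hörmander bounds to a single size estimate of radial type, $\int_{\rho(x,y)\ge 2|y-z|}\rho(x,y)^{-(n+\delta)}\,dx \le C|y-z|^{-\delta}$. For \eqref{eq:H1} the constraint $\rho(x,y)\ge 2|y-z|$ is precisely $|y-z|\le \tfrac12\rho(x,y)$, so \eqref{eq:k2} yields the pointwise bound $|K(x,y)-K(x,z)|\le A|y-z|^{\delta}\rho(x,y)^{-(n+\delta)}$ on the whole domain of integration; analogously, \eqref{eq:k3} handles the integrand in \eqref{eq:H2}. Thus the entire lemma rests on proving this size inequality uniformly.

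To carry it out for \eqref{eq:H1}, I would exploit $\rho(x,y)=\min_{1\le i\le r}\rho_i(x,y)$ in two ways: the condition $\rho(x,y)\ge 2|y-z|$ forces $\rho_i(x,y)\ge 2|y-z|$ for every $i$, and $\rho(x,y)^{-(n+\delta)}=\max_i \rho_i(x,y)^{-(n+\delta)}\le \sum_{i=1}^r \rho_i(x,y)^{-(n+\delta)}$. It therefore suffices to bound, for each fixed $i$,
\[
  \int_{\rho_i(x,y)\ge 2|y-z|}\frac{dx}{\rho_i(x,y)^{n+\delta}}
\]
by $C|y-z|^{-\delta}$. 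Here I would invoke Lemma~\ref{Lm:rho} to replace $\rho_i(x,y)$ by its equivalent $\tilde\rho^*_i(x,y)=|y-\eta_{i,y}|+|x-\gamma_i^{-1}(\eta_{i,y})|$; the crucial feature of this expression is that both $\eta_{i,y}$ and $c:=\gamma_i^{-1}(\eta_{i,y})$ are fixed once $y$ and $i$ are fixed, so the integrand becomes $(d+|x-c|)^{-(n+\delta)}$ with $d=|y-\eta_{i,y}|$ a constant in $x$.

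At this point the estimate is an elementary radial computation: switching to polar coordinates around $c$ and using $s=d+r$ gives $\int_{s\ge\max(d,\,C'|y-z|)} s^{-1-\delta}\,ds$, which is bounded by $C|y-z|^{-\delta}$ whether or not $d\ge C'|y-z|$. Summing over $1\le i\le r$ yields the uniform constant claimed in \eqref{eq:H1}. The inequality \eqref{eq:H2} is handled by the same scheme with $x$ and $y$ swapped in the roles they play: now one uses \eqref{eq:k3} and the equivalent form $\tilde\rho_i(y,x)=|y-\xi_{i,y}|+|x-\gamma_i(\xi_{i,y})|$ from \eqref{eq:tilde:rho}, where the point $\gamma_i(\xi_{i,y})$ plays the role of the center; the same radial calculation closes the proof.

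The main obstacle dictating this strategy is that $\rho(x,y)$ has no closed-form expression and is not even a metric on $\bbR^n$ in the $x$-variable, so polar coordinates cannot be applied directly to $\rho(x,y)^{-(n+\delta)}$. The equivalences \eqref{eq:rho:1}–\eqref{eq:rho:2} are precisely what make the calculation tractable, by trading the awkward geometric quantity $\rho_i$ for the sum $\tilde\rho_i^*$ of a constant and a distance from a fixed point, at the price of a multiplicative constant depending only on $c_\gamma$ and an outer sum over the $r$ branches of $\Gamma$.
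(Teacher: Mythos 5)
Your proof is correct and follows the same strategy as the paper's: reduce to the size estimate $\int_{\rho(x,y)\ge 2a}\rho(x,y)^{-(n+\delta)}\,dx\lesssim a^{-\delta}$ via (\ref{eq:k2})/(\ref{eq:k3}), pass to the equivalent quantity $\tilde\rho^*_i$ (resp.\ $\tilde\rho_i$) whose form $d+|x-c|$ with $c,d$ independent of $x$ admits an elementary radial bound, and sum over $i$. The only cosmetic difference is in the closing computation, where the paper splits into $|x-\gamma_i^{-1}(\eta_{i,y})|\le a$ and $|x-\gamma_i^{-1}(\eta_{i,y})|\ge a$ rather than changing variables $s=d+r$ in polar coordinates; the two are interchangeable.
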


\begin{proof}
We only need to prove (\ref{eq:H1}). And (\ref{eq:H2}) can be proved similarly.

Set $a=|y-z|$.
By Lemma~\ref{Lm:rho}, we have
\begin{eqnarray*}
&& \int_{\rho(x,y)\ge 2|y-z|}|K(x,y)-K(x,z)|dx \\
&\le& \int_{\rho(x,y)\ge 2a} \frac{Aa^{\delta}}{\rho(x,y)^{n+\delta}} dx \\
&\le& \int_{\tilde\rho^*(x,y)\ge 2a} \frac{A(2c_{\gamma}+2)^{n+\delta}a^{\delta}}
   {\tilde\rho^*(x,y)^{n+\delta}} dx \\
&\le& \sum_{i=1}^r
  \int_{\tilde\rho^*_i(x,y)=\tilde\rho^*(x,y)\ge 2a}
     \frac{C' a^{\delta}}
   {\tilde\rho^*_i(x,y)^{n+\delta}} dx \\
&\le& \sum_{i=1}^r\Bigg(
  \int_{\tilde\rho^*_i(x,y)\ge 2a, |x-\gamma_i^{-1}(\eta_{i,y})|\le a} \frac{C' a^{\delta}}
   {\tilde\rho^*_i(x,y)^{n+\delta}} dx   +
  \int_{\tilde\rho^*_i(x,y)\ge 2a, |x-\gamma_i^{-1}(\eta_{i,y})|\ge a} \frac{C' a^{\delta}}
   {\tilde\rho^*_i(x,y)^{n+\delta}} dx \Bigg)\\
&\le& \sum_{i=1}^r\Bigg(
  C''   +
  \int_{ |x-\gamma_i^{-1}(\eta_{i,y})|\ge a} \frac{C' a^{\delta}}
   {   |x - \gamma_i^{-1}(\eta_{i,y})|^{n+\delta}} dx \Bigg)\\
&\le& C.
\end{eqnarray*}
This completes the proof.
\end{proof}

Given a positive number $\theta$ and a cube $Q\subset\bbR^n$, we define
\[
  Q_{i,\theta} = \begin{cases}
  \{x:\, d(x,\gamma_i^{-1}(\eta_{i,Q}))\le \theta\cdot \ell(Q)\},
     & d(Q, \gamma_i(\mathcal D_i)) < 2n^{1/2}\ell(Q),\\
 \emptyset, & \mathrm{otherwise},
    \end{cases}
\]
where $\ell(Q)$ is the side length of $Q$.
Let $Q_{\theta} = \bigcup_{1\le i\le r}  Q_{i,\theta}$.


The following result is needed in the proof of weak type $(1,1)$.

\begin{Lemma} \label{Lm:Qtheta}
Let $\Gamma\in SHC$.
\begin{enumerate}
\item There is some constant $C$ such that for any   $\theta>1$ and cube $Q\subset\bbR^n$,
$|Q_{\theta}| \le C\theta^n |Q|$.

\item For $\theta>2n^{1/2}+5 n^{1/2} c_{\gamma} $ and every cube $Q$,
we have $\rho(x,y) \ge 2n^{1/2}\ell(Q)$
for all $x\not\in Q_{\theta}$ and $y\in Q$.
\end{enumerate}
\end{Lemma}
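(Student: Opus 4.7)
The plan is to treat the two parts separately. For (1), I would first argue that the set $\eta_{i,Q}$ has diameter of order $\ell(Q)$ whenever it is nonempty (that is, whenever $d(Q,\gamma_i(\mathcal D_i)) < 2n^{1/2}\ell(Q)$). Indeed, for any $y\in Q$ the closest-point distance $|y-\eta_{i,y}|=d(y,\gamma_i(\mathcal D_i))$ is bounded by $d(Q,\gamma_i(\mathcal D_i))+\mathrm{diam}(Q)\le 3n^{1/2}\ell(Q)$, so by the triangle inequality $|\eta_{i,y}-\eta_{i,y'}|\le 7n^{1/2}\ell(Q)$ for all $y,y'\in Q$. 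Applying the Lipschitz bound \eqref{eq:Gamma} on $\gamma_i^{-1}$ then gives $\mathrm{diam}(\gamma_i^{-1}(\eta_{i,Q}))\le 7n^{1/2}c_\gamma\ell(Q)$. Fixing any point $x_0\in\gamma_i^{-1}(\eta_{i,Q})$, we conclude $Q_{i,\theta}\subset B(x_0,(\theta+7n^{1/2}c_\gamma)\ell(Q))$, whose volume is at most $c_n(\theta+7n^{1/2}c_\gamma)^n\ell(Q)^n\le C\theta^n|Q|$ for $\theta>1$. Summing over the $r$ indices and using $|Q_\theta|\le\sum_i|Q_{i,\theta}|$ finishes (1).

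For (2) I would argue by contrapositive. Suppose $y\in Q$ and $\rho(x,y)<2n^{1/2}\ell(Q)$; choose $i$ realizing the minimum, so $\rho_i(x,y)=\rho(x,y)$, and pick $x_0\in\mathcal D_i$ achieving $\rho_i(x,y)=|(x,y)-(x_0,\gamma_i(x_0))|$. Then $|y-\gamma_i(x_0)|\le\rho_i(x,y)<2n^{1/2}\ell(Q)$, so $d(Q,\gamma_i(\mathcal D_i))<2n^{1/2}\ell(Q)$, putting us in the nontrivial case of the definition of $Q_{i,\theta}$. The goal is to show $x\in Q_{i,\theta}$, which would contradict $x\notin Q_\theta$.

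To estimate $d(x,\gamma_i^{-1}(\eta_{i,Q}))$, I would use the particular point $\gamma_i^{-1}(\eta_{i,y})\in\gamma_i^{-1}(\eta_{i,Q})$. Since $\eta_{i,y}$ is the closest point in $\gamma_i(\mathcal D_i)$ to $y$, we have $|\eta_{i,y}-y|\le|\gamma_i(x_0)-y|$, hence $|\eta_{i,y}-\gamma_i(x_0)|\le 2|y-\gamma_i(x_0)|\le 2\rho_i(x,y)$. The Lipschitz bound on $\gamma_i^{-1}$ then gives $|\gamma_i^{-1}(\eta_{i,y})-x_0|\le 2c_\gamma\rho_i(x,y)$, and combining with $|x-x_0|\le\rho_i(x,y)$,
\[
d\bigl(x,\gamma_i^{-1}(\eta_{i,Q})\bigr)\le|x-\gamma_i^{-1}(\eta_{i,y})|\le(1+2c_\gamma)\rho_i(x,y)<(2+4c_\gamma)n^{1/2}\ell(Q).
\]
Since $2+4c_\gamma\le 2+5c_\gamma$, this is less than $\theta\ell(Q)$, so $x\in Q_{i,\theta}\subset Q_\theta$, a contradiction.

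The only delicate point is the non-expansiveness of the nearest-point projection onto $\gamma_i(\mathcal D_i)$ used in part (1); since that set is not generally convex, one can only use the comparison $|\eta_{i,y}-y|\le|y'-y|$ for any $y'\in\gamma_i(\mathcal D_i)$ and the triangle inequality, rather than a true $1$-Lipschitz property. This is exactly what is needed in the two diameter estimates above, so it creates no real obstacle. Everything else is elementary Euclidean geometry together with the Lipschitz hypothesis \eqref{eq:Gamma}.
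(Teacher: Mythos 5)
Your proof is correct and follows essentially the same route as the paper's. In part (1) you bound the diameter of $\gamma_i^{-1}(\eta_{i,Q})$ via the Lipschitz estimate and enclose $Q_{i,\theta}$ in a ball of radius $O(\theta\ell(Q))$, which is what the paper does (the paper picks the specific center $\gamma_i^{-1}(\eta_{i,y_i})$ and obtains the slightly sharper constant $6n^{1/2}c_\gamma$ in place of your $7n^{1/2}c_\gamma$; either suffices). In part (2) the paper argues directly in two cases, showing for all $z\in\gamma_i(\mathcal D_i)$ that $\max\{|x-\gamma_i^{-1}(z)|,|y-z|\}\ge 2n^{1/2}\ell(Q)$, while you take the contrapositive and run the same Lipschitz chain through the minimizing $x_0$; the two are logically equivalent and equally rigorous. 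Your closing remark on the non-expansiveness of the nearest-point map is a valid caution and is exactly the reason both you and the paper work with diameter bounds rather than a 1-Lipschitz projection.
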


\begin{proof}
First, we prove (i).
For each $1\le i\le r$,
we only need to consider the case $d(Q, \gamma_i(\mathcal D_i)) < 2n^{1/2}\ell(Q)$.
In this case, there is some $y_i\in Q$ such that
\[
  d(y_i, \gamma_i(\mathcal D_i)) = d(Q, \gamma_i(\mathcal D_i)) < 2n^{1/2}\ell(Q).
\]
For any $x\in Q_{i,\theta}$, there is some $y_x\in Q$ such that
\[
  |x - \gamma_i^{-1}(\eta_{i,y_x})| \le \theta\cdot \ell(Q).
\]
It follows that
\begin{eqnarray*}
     |x - \gamma_i^{-1}(\eta_{i,y_i})|
&\le&  |x - \gamma_i^{-1}(\eta_{i,y_x})|
    + |\gamma_i^{-1}(\eta_{i,y_x}) - \gamma_i^{-1}(\eta_{i,y_i})|\\
&\le&   \theta\cdot \ell(Q)
    + c_{\gamma}| \eta_{i,y_i}-\eta_{i,y_x}  |\\
&\le&   \theta\cdot \ell(Q)
    + c_{\gamma}\Big(| \eta_{i,y_i} - y_i| + |y_i-y_x| + |y_x - \eta_{i,y_x} |\Big)\\
&=&   \theta\cdot \ell(Q)
    + c_{\gamma}\Big(d(y_i, \gamma_i(\mathcal D_i)) + |y_i-y_x|
     +   d(y_x, \gamma_i(\mathcal D_i))\Big)\\
&\le&   \theta\cdot \ell(Q)
    + 2c_{\gamma}\Big(d(y_i, \gamma_i(\mathcal D_i)) + |y_i-y_x|\Big)
    \\
&\le&  \Big( \theta + 6n^{1/2}c_{\gamma}\Big)\ell(Q).
\end{eqnarray*}
Hence
\[
  Q_{\theta} \subset \bigcup_{i} \left\{x:\,  |x - \gamma_i^{-1}(\eta_{i,y_i})|
    \le \left(\theta + 6 n^{1/2} c_{\gamma} \right) \ell(Q) \right\}.
\]
Since $\theta>1$, we have   $| Q_{\theta}| \le  C \theta^n |Q|$.

Next, we prove (ii).
Fix some $1\le i\le r$. There are two cases.

Case 1. $d(Q, \gamma_i(\mathcal D_i)) \ge  2n^{1/2}\ell(Q)$.
In this case,
for any $x\in\bbR^n$ and $y\in Q$,
\[
  \rho_i(x,y) = \min_{z\in \gamma_i(\mathcal D_i)}
     |(x,y) - (\gamma_i^{-1}(z), z)|
  \ge  \min_{z\in \gamma_i(\mathcal D_i)}|y-z|
  \ge  2n^{1/2}\ell(Q).
\]

Case 2. $d(Q, \gamma_i(\mathcal D_i)) <  2n^{1/2}\ell(Q)$.
We conclude that
for any $x\not\in Q_{\theta}$ and
$z\in \gamma_i(\mathcal D_i)$,
 $\max\{|x-\gamma_i^{-1}(z)|, |y-z|\}\ge 2 n^{1/2}  \ell(Q)$.

To see this, take some $y_i\in Q$ such
that $ d(y_i,\gamma_i(\mathcal D_i))$ $= d(Q, \gamma_i(\mathcal D_i)) <  2n^{1/2}\ell(Q)$.
If $|y-z| \le 2 n^{1/2}\ell(Q)$, then we have
\begin{eqnarray*}
  |x-\gamma_i^{-1}(z)|
&\ge& | x- \gamma_i^{-1}(\eta_{i,y})| - |\gamma_i^{-1}(\eta_{i,y}) - \gamma_i^{-1}(z)|
     \\
&\ge& \theta\cdot\ell(Q) - c_{\gamma}| \eta_{i,y} -z|
     \\
&\ge& \theta\cdot\ell(Q) - c_{\gamma} (|\eta_{i,y} - y| +  |y-z|)
     \\
&\ge& \theta\cdot\ell(Q) - c_{\gamma} \left(|y-y_i| + d(y_i,\gamma_i(\mathcal D_i))
  +  |y-z|\right)
     \\
&\ge& (\theta - 5 n^{1/2} c_{\gamma}) \ell(Q)\\
&\ge&  2 n^{1/2}  \ell(Q).
\end{eqnarray*}
Hence $\max\{|x-\gamma_i^{-1}(z)|, |y-z|\}\ge 2 n^{1/2}  \ell(Q)$. Therefore,
\[
  \rho_i(x,y) = \min_{z\in \gamma_i(\mathcal D_i)}
     |(x,y) - (\gamma_i^{-1}(z), z)|
  \ge  2n^{1/2}\ell(Q).
\]
In both cases, we get
$  \rho_i(x,y)  \ge  2n^{1/2}\ell(Q)$. It follows that
$\rho(x,y) \ge 2n^{1/2}\ell(Q)$
for all $x\not\in Q_{\theta}$ and $y\in Q$.
This completes the proof.
\end{proof}

The weak type $(1,1)$ and the $L^p$ boundedness for $T\in CZO_{\gamma}$
can be proved similarly to the one for ordinary Calder\'on-Zygmund operators.
To keep the paper more readable, we include a proof here.

\begin{Theorem}\label{thm:main2}
Let $T\in CZO_{\gamma}$ and $1<p<\infty$. Then there is some constant $C>0$ such that
\begin{equation}\label{eq:e4}
\|Tf\|_{L^{1,\infty}}\le C\|f\|_{L^1}
\end{equation}
and
\begin{equation}\label{eq:e5}
\|Tf\|_{L^p}\le C\|f\|_{L^p}.
\end{equation}
\end{Theorem}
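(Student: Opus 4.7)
The plan is to follow the classical Calder\'on--Zygmund scheme, with the standard enlargement $2\sqrt{n}Q$ replaced by the set $Q_\theta$ whose basic properties are given in Lemma~\ref{Lm:Qtheta}. The weak $(1,1)$ bound is the main task; the $L^p$ bounds for $1<p<\infty$ follow by standard Marcinkiewicz interpolation and duality.

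For (\ref{eq:e4}), I would fix $\alpha>0$, apply the classical Calder\'on--Zygmund decomposition to $f\in L^1$ at height $\alpha$, and write $f=g+b$ with $b=\sum_{Q\in\mathcal{Q}}b_Q$, where the cubes in $\mathcal{Q}$ are pairwise disjoint, $\mathrm{supp}\,b_Q\subset Q$, $\int b_Q=0$, $\|b_Q\|_{L^1}\le C\alpha|Q|$, $\sum_{Q}|Q|\le C\|f\|_{L^1}/\alpha$, $\|g\|_{L^\infty}\le C\alpha$, and $\|g\|_{L^1}\le\|f\|_{L^1}$. Splitting $|\{|Tf|>\alpha\}|\le|\{|Tg|>\alpha/2\}|+|\{|Tb|>\alpha/2\}|$, Chebyshev together with the $L^2$ boundedness of $T$ and $\|g\|_2^2\le C\alpha\|f\|_{L^1}$ yields $|\{|Tg|>\alpha/2\}|\le C\alpha^{-1}\|f\|_{L^1}$.

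For the bad part, I would choose $\theta>2\sqrt{n}+5\sqrt{n}\,c_\gamma$ and set $E=\bigcup_{Q\in\mathcal{Q}}Q_\theta$. Lemma~\ref{Lm:Qtheta}(i) gives $|E|\le C\theta^n\sum_{Q}|Q|\le C\alpha^{-1}\|f\|_{L^1}$, so it suffices to estimate $|\{x\notin E:|Tb(x)|>\alpha/2\}|$. For each $Q$, for $y\in Q$ and $x\notin Q_\theta$, Lemma~\ref{Lm:Qtheta}(ii) gives $\rho(x,y)\ge 2\sqrt{n}\,\ell(Q)\ge 2|y-c_Q|$, where $c_Q$ is the center of $Q$; in particular $x\notin\gamma^{-1}(\mathrm{supp}\,b_Q)$, so the integral representation $Tb_Q(x)=\int K(x,y)b_Q(y)\,dy$ is valid. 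Using $\int b_Q=0$, Fubini, and the H\"ormander condition (\ref{eq:H1}) from Lemma~\ref{Lm:Hoermander}, I obtain
\[
\int_{x\notin E}|Tb_Q(x)|\,dx\le\int_Q|b_Q(y)|\int_{\rho(x,y)\ge 2|y-c_Q|}|K(x,y)-K(x,c_Q)|\,dx\,dy\le C\|b_Q\|_{L^1}.
\]
Summing over $Q$ gives $\int_{x\notin E}|Tb(x)|\,dx\le C\|f\|_{L^1}$, and Chebyshev completes (\ref{eq:e4}). For (\ref{eq:e5}), Marcinkiewicz interpolation between (\ref{eq:e4}) and the $L^2$ bound handles $1<p\le 2$. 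For $2<p<\infty$, I would dualize: the formal adjoint $T^*$ has kernel $\overline{K(y,x)}$, which, in view of property (iv) in the definition of $SHC$ (the symmetric Lipschitz bound on $\gamma_i^{-1}$), satisfies the conditions defining an operator in $CZO_{\gamma^*}$ for the hyper curve $\gamma^*$ obtained by exchanging the two factors; applying the already-proved estimates to $T^*$ and dualizing gives the $L^p$ bound for $p>2$.

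The main obstacle is the geometric control of the bad set. Since the singular set of $K$ is a hyper curve rather than the diagonal, the standard dilate $2\sqrt{n}Q$ cannot capture every point $x$ for which $\rho(x,y)$ can be small for some $y\in Q$; the enlargement has to be adapted to the curve, and a single parameter $\theta$ must simultaneously ensure $\rho(x,y)\ge 2\sqrt{n}\,\ell(Q)$ on $E^c$ (to invoke H\"ormander) and $|E|\le C\alpha^{-1}\|f\|_{L^1}$. Lemma~\ref{Lm:Qtheta} resolves both issues, after which the classical scheme runs essentially unchanged.
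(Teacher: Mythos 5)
Your proof follows the same scheme as the paper's: a Calder\'on--Zygmund decomposition at a given height, Chebyshev together with the $L^2$ bound for the good part, the adapted enlargement $\bigcup_k (Q_k)_\theta$ with $\theta>2\sqrt{n}+5\sqrt{n}\,c_\gamma$ in place of the usual concentric dilate, the H\"ormander condition from Lemma~\ref{Lm:Hoermander} combined with Lemma~\ref{Lm:Qtheta}(ii) for the bad part, and then Marcinkiewicz interpolation for $1<p\le 2$ and duality for $p>2$. The substance is identical to the paper's argument; you simply spell out (more explicitly than the paper) that $x\notin Q_\theta$ legitimizes the off-support integral representation for $Tb_Q$, and that the adjoint kernel $\overline{K(y,x)}$ fits the $CZO$ framework for the reflected hyper curve.
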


\begin{proof}
First, we show that $T$ is weakly bounded on a dense set of $L^1(\bbR^n)$.
Fix some $f\in L^1\bigcap L^2(\bbR^n)$ and $\lambda>0$.
Form the Calder\'{o}n-Zygmund decomposition
 of $f$ at height $\lambda$. We get disjoint cubes $\{Q_{k}:\, k\in \mathbb{K}\}$  such that
\[
    |f(x)| \le \lambda,\qquad x\not\in \bigcup_{k\in\mathbb K} Q_k
\]
and
\[
   \lambda \le \frac{1}{|Q_k|} \int_{Q_k} |f(x)| dx \le 2^n\lambda, \qquad k\in \mathbb{K}.
\]
Define
\[
g(x)=\left\{ \begin{array}{ll}
  f(x),  &x\not\in \bigcup_{k\in\mathbb K} Q_k, \\
  |Q_{k}|^{-1}\int_{Q_{k}}f(y)dy,  &x\in \mbox{ interior of }Q_{k},
\end{array}
\right.
\]
and
\[
b_{k}(x)=\left\{\begin{array}{ll}
  0, & x\not\in Q_{k} ,\\
  f(x)-|Q_{k}|^{-1}\int_{Q_{k}}f(y)dy, &x\in \mbox{ interior of }Q_{k}.
\end{array}\right.
\]
Since $Q_k$ are disjoint, the series $b(x) := \sum_{k\in\mathbb K} b_k(x)$ is convergent in
$L^2(\bbR^n)$.
Moreover, for any $\lambda>0$,
\begin{equation}\label{eq:lambda}
  \{x:\,|(Tf)(x)| \ge \lambda\}
  \subset  \{x:\,|(Tg)(x)| \ge \frac{\lambda}{2}\} \bigcup
   \{x:\,|(Tb)(x)| \ge \frac{\lambda}{2}\}.
\end{equation}

Take some $\theta>2n^{1/2}+5 n^{1/2} c_{\gamma}$.
We see from Lemma~\ref{Lm:Qtheta}  that for any cube $Q$, $\rho(x,y)\ge 2|y-z|$
for all $x\notin  Q_{\theta}$ and $y,z\in Q$.

For each $k\in\mathbb K$, set $Q_k^*= (Q_k)_{\theta}$. Let $B^{*}=\bigcup_{k\in\mathbb K}
Q_{k}^{*}$ and
$G^{*}=\mathbb{R}^n\setminus{B}^{*}$.
Then we have
\begin{eqnarray}
    \Big|\{x:\,|Tg(x)\geq \frac{\lambda}{2}|\}\Big|
&\le& \frac{4}{\lambda^2} \int_{\{x:\,|Tg(x)\geq \frac{\lambda}{2}|\}} |Tg(x)|^2 dx    \label{eq:tf1} \\
&\le& \frac{4\|T\|_{L^2\rightarrow L^2}^2}{\lambda^2}
      \int_{\bbR^n} |g(x)|^2 dx \nonumber \\
&\le& \frac{4\|T\|_{L^2\rightarrow L^2}^2}{\lambda^2}\left(
   \int_{G} |g(x)|^2 dx + \sum_{k\in\mathbb{K}} \int_{Q_k}\!\! 2^n\lambda |g(x)| dx
       \right) \nonumber \\
&\le& \frac{4\|T\|_{L^2\rightarrow L^2}^2}{\lambda^2}\left(
   \int_{G}\! \lambda |f(x)| dx +2^n\lambda  \sum_{k\in\mathbb{K}} \int_{Q_k}\!\!\! |f(x)| dx
       \!\right) \nonumber \\
&\le&    \frac{2^{n+2}}{\lambda}\|T\|_{L^2\rightarrow L^2}^{2}\|f\|_{L^{1}}.  \nonumber 
\end{eqnarray}
On the other hand, we have
\begin{eqnarray}
   \Big|\{x:\, |Tb(x)\geq \frac{\lambda}{2}|\}\Big| 
&\le& |B^*| + \Big|\{x\in G^*:\,|Tb(x)\geq \frac{\lambda}{2}|\}\Big| \label{eq:tf2} \\
&\le& C'\theta^n \Big| \bigcup_{k\in\mathbb K} Q_k\Big| + \frac{2}{\lambda}\int_{G^*}|Tb(x)|dx\nonumber \\
&\le&  \frac{C'\theta^n}{\lambda}\|f\|_{1} +
  \frac{2}{\lambda} \sum_{k\in\mathbb K} \int_{G^*}|Tb_k(x)|dx.\nonumber
\end{eqnarray}
Observe that
\begin{eqnarray}
 \int_{G^*}|Tb_k(x)|dx
&\le&   \int_{\bbR^n\setminus Q_k^*}|Tb_k(x)|dx \label{eq:tem2} \\
&=&   \int_{\bbR^n\setminus Q_k^*} dx
  \bigg|\int_{Q_k} K(x,y) b_k(y) dy\bigg| \nonumber \\
&=&  \int_{\bbR^n\setminus Q_k^*} dx
  \bigg|\int_{Q_k} (K(x,y)- K(x,y_k)) b_k(y) dy\bigg| \nonumber \\
&\le&   \int_{Q_k} | b_k(y)| dy
   \int_{\rho(x,y)\ge 2|y-y_k|} |K(x,y)- K(x,y_k)| dx \nonumber\\
&\le&  C\int_{Q_k} | b_k(y)| dy,\nonumber
\end{eqnarray}
where  $y_k$ is the center of $Q_k$
and we use Lemma~\ref{Lm:Hoermander} in the last step.
Now we see from (\ref{eq:tem2}) that
\begin{eqnarray}
 \sum_{k\in\mathbb K} \int_{G^{*}}|Tb_k(x)|dx
&\leq&
C
\sum_{k\in\mathbb K}\int_{Q_{k}}|b_{k}(y)|dy  \label{eq:tf3} \\
&\le&
 C\sum_{k\in\mathbb K}\int_{Q_{k}}\Big(|f(y)|+\frac{1}{|Q_{k}|}
  \int_{Q_{k}}|f(x)|dx\Big)dy \nonumber \\
&=&2C \sum_{k\in\mathbb K}\int_{Q_{k}}|f(y)|dy  \nonumber \\
&\le&2C\|f\|_{L^{1}}.\nonumber
\end{eqnarray}
Since $f(x) = g(x) + b(x)$, together with (\ref{eq:tf1}), (\ref{eq:tf2}) and  (\ref{eq:tf3}),
we have
\begin{equation}\label{eq:wkL1}
\|Tf\|_{L_{weak}^{1}}
\le \left( 2^{n+2}\|T\|_{L^{2}\rightarrow L^{2}}^{2}+C'\theta^n
+\, 4C\right) \|f\|_1,
\end{equation}
where $f\in L^1\bigcap L^2(\bbR^n)$.

For any $f\in L^1(\bbR^n)$, we can find a sequence $\{f_k:\,k\ge 1\}
\subset L^1\cap L^2(\bbR^n)$ such that
$\|f-f_k\|_1\rightarrow 0$. By (\ref{eq:wkL1}), we have
\[
    \Big|\{x:\, |(T(f_k-f_{k'}))(x)|\ge \lambda\}\Big|
      \le \frac{C''}{\lambda} \|f_k -f_{k'}\|_1.
\]
Hence $\{Tf_k:\, k\ge 1\}$ is convergent in measure.
Since $\{f_k:\,k\ge 1\}$ is arbitrary, it is easy to see that the
limit, denoted by $Tf$, is independent of
choices of  $\{f_k:\,k\ge 1\}$.
Hence, (\ref{eq:wkL1}) is true for any $f\in L^1(\bbR^n)$.
This proves $(\ref{eq:e4})$.

By the interpolation theorem, we prove $(\ref{eq:e5})$ with $1<p<2$.
And by a duality argument, we prove $(\ref{eq:e5})$ with $p>2$.
\end{proof}

We see from Theorem~\ref{thm:main2} that an operator $T\in CZO_{\gamma}$ is well defined on $L^p(\bbR)$ for $1\le p<\infty$.
As for ordinary Calder\'on-Zygmund operators, $Tf$ can also be expressed as integrals in some cases. Specifically,
we have the following result, which can be proved similarly to \cite[Proposition 8.2.2]{G2}.

\begin{Theorem}
Let $T$ be  an operator in $CZO_{\gamma}$ associated with some kernel $K$.
Then for $f\in L^p(\bbR^n)$, $1\le p<\infty$, the following absolutely convergent integral representation is valid:
\[
  (Tf)(x) = \int_{\bbR^n} K(x,y) f(y)dy, \qquad \mbox{ a.e. on } \gamma^{-1}(\supp f).
\]
\end{Theorem}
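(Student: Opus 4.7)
The plan is to approximate $f\in L^p$ by smooth, compactly supported functions $f_k\to f$ in $L^p$, apply the defining identity $Tf_k(x)=\int K(x,y)f_k(y)\,dy$ (valid for $x\notin\gamma^{-1}(\supp f_k)$), and pass to the limit in both sides using the $L^p$-boundedness from Theorem~\ref{thm:main2} and H\"older's inequality. A convenient choice is $f_k=(f\chi_{B(0,k)})*\phi_{1/k}$ with $\phi_\varepsilon$ a standard mollifier, so that $f_k\in C_c^\infty(\bbR^n)$ with $f_k\to f$ in $L^p$ and $\supp f_k\subset\supp f+B(0,1/k)$. The main technical obstacle will be to verify $K(x_0,\cdot)\chi_{\supp f}\in L^{p'}(\bbR^n)$, which requires extracting from the non-diagonal kernel estimate~\eqref{eq:k1}, via Lemma~\ref{Lm:rho} and a case analysis on $\xi_{i,x_0}$, a lower bound on $\rho(x_0,y)$ that is both uniform on $\supp f$ and suitably growing as $|y|\to\infty$.

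Fix $x_0\notin\gamma^{-1}(\supp f)$. Since $\gamma^{-1}(\supp f)=\bigcup_{i=1}^r\gamma_i^{-1}(\supp f)$ is a finite union of preimages of the closed set $\supp f$ under the continuous maps $\gamma_i\colon \mathcal D_i\to\bbR^n$, it is closed, hence $\eta_0:=d(x_0,\gamma^{-1}(\supp f))>0$. Write $p_i(x_0):=\gamma_i(\xi_{i,x_0})$. If $x_0\in\mathcal D_i$ then $p_i(x_0)=\gamma_i(x_0)\notin\supp f$, giving $|y-p_i(x_0)|\ge d(\gamma_i(x_0),\supp f)>0$ for $y\in\supp f$; if $x_0\notin\mathcal D_i$, then either $p_i(x_0)\notin\supp f$ (again with a positive distance to $\supp f$) or $\xi_{i,x_0}\in\gamma_i^{-1}(\supp f)$, forcing $|x_0-\xi_{i,x_0}|\ge\eta_0$. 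In every case, Lemma~\ref{Lm:rho} combined with~\eqref{eq:tilde:rho} yields
\[
\rho_i(x_0,y)\ge \frac{|x_0-\xi_{i,x_0}|+|y-p_i(x_0)|}{2(c_\gamma+1)},
\]
producing a positive lower bound for $\rho(x_0,y)$ on a neighborhood $\supp f+B(0,\delta)$ of $\supp f$, together with $\rho(x_0,y)\gtrsim\min_i|y-p_i(x_0)|\to\infty$ as $|y|\to\infty$. Hence by~\eqref{eq:k1} the kernel $K(x_0,\cdot)$ is bounded on $\supp f+B(0,\delta)$ and $O(|y|^{-n})$ at infinity, so $K(x_0,\cdot)\chi_{\supp f+B(0,\delta)}\in L^{p'}(\bbR^n)$ for every $1\le p<\infty$. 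H\"older's inequality now gives $\int|K(x_0,y)f(y)|\,dy<\infty$.

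To obtain the identity, pick $k$ so large that $1/k<\delta$. Then $\supp f_k\subset\supp f+B(0,\delta)$ and $\gamma_i(x_0)\notin\supp f_k$ for every $i$ with $x_0\in\mathcal D_i$, so $x_0\notin\gamma^{-1}(\supp f_k)$ and the definition of $CZO_\gamma$ gives $Tf_k(x_0)=\int K(x_0,y)f_k(y)\,dy$. By Theorem~\ref{thm:main2}, $Tf_k\to Tf$ in $L^p$, so along a subsequence $\{k_j\}$, $Tf_{k_j}\to Tf$ almost everywhere. The same $L^{p'}$-bound on $K(x_0,\cdot)\chi_{\supp f+B(0,\delta)}$ with H\"older's inequality gives $\int K(x_0,y)f_{k_j}(y)\,dy\to\int K(x_0,y)f(y)\,dy$, and combining the two limits yields the required identity for a.e.\ $x_0\notin\gamma^{-1}(\supp f)$.
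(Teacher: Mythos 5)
Your proof is correct and takes exactly the route the paper intends: approximate $f$ by $C_c^\infty$ functions with slightly enlarged support, apply the defining kernel representation at points off $\gamma^{-1}(\supp f_k)$, and pass to the limit via H\"older together with the $L^p$-bound of Theorem~\ref{thm:main2}; this is the scheme of Grafakos, Proposition~8.2.2, which the paper cites, with your case analysis on $\xi_{i,x_0}$ and Lemma~\ref{Lm:rho} correctly supplying the required lower bound on $\rho(x_0,\cdot)$ (bounded below near $\supp f$, $\gtrsim |y|$ at infinity) in place of the trivial diagonal estimate $|x_0-y|\ge d(x_0,\supp f)$. One remark worth recording: the theorem as printed says ``a.e.\ on $\gamma^{-1}(\supp f)$'' but must mean ``a.e.\ on $(\gamma^{-1}(\supp f))^c$'' (on $\gamma^{-1}(\supp f)$ the kernel can be singular at points of $\supp f$ and the integral need not converge absolutely), and your argument correctly addresses this intended version; it would also be cleaner to state up front that $\delta$ is taken smaller than $\min\{d(p_i(x_0),\supp f):\, p_i(x_0)\notin\supp f\}$, which both secures the lower bound for $\rho(x_0,\cdot)$ on $\supp f+B(0,\delta)$ and guarantees $x_0\notin\gamma^{-1}(\supp f_k)$ once $1/k<\delta$.
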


\textbf{Acknowledgements.}\,\,
The authors thank Professor Dachun Yang for
very useful discussions and suggestions.

\end{document}